\UseRawInputEncoding
\documentclass[12pt]{amsart}
\usepackage{setspace, amsmath, amsthm, amssymb, amsfonts, amscd, epic, graphicx, ulem, dsfont}
\usepackage[T1]{fontenc}
\usepackage{multirow}
\usepackage{bbm}
\usepackage{enumerate}

\makeatletter \@namedef{subjclassname@2010}{
  \textup{2020} Mathematics Subject Classification}
\makeatother

\newtheorem{thm}{Theorem}[section]

\newtheorem{cor}[thm]{Corollary}
\newtheorem{lem}[thm]{Lemma}
\newtheorem{pro}[thm]{Proposition}

\theoremstyle{remark}
\newtheorem*{rema}{Remark}

\theoremstyle{definition}
\newtheorem*{defn}{Definition}
\newtheorem{exa}[thm]{\textbf{Example}}

\newcommand{\Ima}{\operatorname{Im}}
\newcommand{\re}{\operatorname{Re}}
\newcommand{\tr}{\operatorname{tr}}

\newcommand{\R}{\mathbb{R}}

\newcommand{\N}{\mathbb{N}}

\newcommand{\C}{\mathbb{C}}

\begin{document}

\title[Nilpotence of Operators]{When Nilpotence Implies the Zeroness of Linear Operators}
\author[FRID et al.]{Nassima Frid, Mohammed Hichem Mortad$^*$ and Souheyb Dehimi}

\dedicatory{}
\thanks{* Corresponding author.}
\date{}
\keywords{Nilpotent operators. Real and imaginary parts. Positive
operators. Closed operators. Normal operators}

\subjclass[2010]{Primary 15B57, Secondary 47B25, 47A62, 15B48.}

\address{(The first author): Département de
Mathématiques, Université Oran1, Ahmed Ben Bella, B.P. 1524, El
Menouar, Oran 31000, Algeria.}

\email{nassima.frid@yahoo.fr}

\address{(The corresponding and second author) Department of
Mathematics, Université Oran 1, Ahmed Ben Bella, B.P. 1524, El
Menouar, Oran 31000, Algeria.}

\email{mhmortad@gmail.com, mortad.hichem@univ-oran1.dz.}

\address{(The third author): Department of Mathematics, Faculty of Mathematics and Informatics,
University of Mohamed El Bachir El Ibrahimi, Bordj Bou Arréridj,
El-Anasser 34030, Algeria.}

\email{souheyb.dehimi@univ-bba.dz, sohayb20091@gmail.com}

\begin{abstract}
In this paper, we give conditions forcing nilpotent operators
(everywhere bounded or closed) to be null. More precisely, it is
mainly shown any closed or everywhere defined bounded nilpotent
operator with a positive (self-adjoint) real part is automatically
null.
\end{abstract}

\maketitle

\section*{Introduction}

First, we assume readers have some familiarity with the standard
notions and results in matrix and operator theories (see e.g.
\cite{Axler LINEAR ALG DONE RIGHT} and
\cite{Mortad-Oper-TH-BOOK-WSPC}), as well as unbounded operators
(see \cite{Weidmann} for the needed notions, cf.
\cite{Mortad-cex-BOOK}).

Let $H$ be a Hilbert space and let $B(H)$ be the algebra of all
bounded linear operators defined from $H$ into $H$. Recall that
$T\in B(H)$ is said to be positive, symbolically $T\geq0$, if
$<Tx,x>\geq0$ for all $x\in H$. Recall also that any $T$ may always
be expressed as $T=A+iB$ with $A,B\in B(H)$ being both self-adjoint
and $i=\sqrt{-1}$. Necessarily, $A=(T+T^*)/2$ which will be denoted
by $\re T$ and it is called the real part of $T$. Also,
$B=(T-T^*)/{2i}$ is the imaginary part of $T$, written $\Ima T$.

As is well known, nilpotence plays an important role in matrix
theory, and in operator theory in general. The following was shown
in \cite{Mortad-square-roots-normal}:

\begin{pro}\label{MORTAD-PROP}
If $T\in B(H)$ is such that $\re T\geq0$ and $T^2=0$, then $T=0$
\end{pro}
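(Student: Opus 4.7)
The plan is to write $T$ in its Cartesian form $T = A + iB$ where $A = \re T \geq 0$ and $B = \Ima T$ is self-adjoint, and to extract algebraic consequences of $T^2 = 0$. Expanding,
\[
0 = T^2 = (A+iB)^2 = (A^2 - B^2) + i(AB + BA),
\]
and noting that both $A^2 - B^2$ and $AB + BA$ are self-adjoint (the latter because $(AB+BA)^* = BA + AB$), comparing self-adjoint and anti-self-adjoint parts yields the two identities
\[
A^2 = B^2 \qquad \text{and} \qquad AB + BA = 0.
\]

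Next I would exploit the positivity of $A$. Since $A \geq 0$ and $A^2 = B^2$, uniqueness of the positive square root of the positive operator $B^2$ forces $A = |B|$. Substituting into the anticommutation identity gives
\[
|B|B + B|B| = 0.
\]
But $|B|$ and $B$ both lie in the (commutative) $C^{*}$-algebra generated by $B$, hence commute, and the identity collapses to $2|B|B = 0$, i.e., $|B|B = 0$.

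Finally I would argue that $|B|B = 0$ alone forces $B = 0$. Indeed, for any $x \in H$ we have $|B|(Bx) = 0$, so $Bx \in \ker |B|$; since $\ker |B| = \ker B$ (because $\| \, |B|y\| = \|By\|$ for self-adjoint $B$), this means $B^2 x = 0$ for all $x$. Thus $B^2 = 0$, and self-adjointness of $B$ gives $B = 0$. Combined with $A^2 = B^2 = 0$ and $A \geq 0$, this yields $A = 0$, and therefore $T = 0$.

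The subtlest step, and the one I expect to do all the real work, is the identification $A = |B|$: it is the unique place where the positivity hypothesis on $\re T$ enters the argument, and everything downstream is a mechanical consequence of this together with $AB + BA = 0$. If the hypothesis were weakened to $\re T$ merely self-adjoint, the proof would break at exactly this point, signalling that positivity is genuinely essential.
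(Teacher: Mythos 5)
Your proof is correct, but it follows a genuinely different route from the one in the paper. The paper quotes this proposition from an earlier work and proves the stronger Theorem \ref{Main THM} ($T^n=0$ for any $n\geq 2$) by a two-step argument: in finite dimensions, nilpotence gives $\tr T=0$, hence $\tr A=\tr B=0$, and positivity of $A$ together with $\tr A=0$ forces $A=0$, so $T=iB$ is normal and a normal nilpotent is zero; the infinite-dimensional case is then reduced to this by restricting $T$ to the finite-dimensional invariant subspace spanned by the orbit $x, Tx,\dots,T^{n-1}x$. Your argument instead extracts $A^2=B^2$ and $AB+BA=0$ from $T^2=0$ by separating self-adjoint and skew-adjoint parts, identifies $A=|B|$ by uniqueness of the positive square root, and uses the functional-calculus fact that $|B|$ commutes with $B$ to collapse the anticommutator to $|B|B=0$, whence $B=0$ and then $A=0$. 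Each step checks out (in particular $\ker|B|=\ker B$ and the self-adjointness of $AB+BA$), and your approach has the merit of working directly in any Hilbert space with no trace and no reduction to finite dimensions; its limitation is that it is wedded to the exponent $2$ --- the clean splitting of $(A+iB)^2$ into a self-adjoint and a skew-adjoint piece has no analogue for $(A+iB)^n$ with $n\geq 3$, which is exactly where the paper's trace-plus-orbit argument earns its keep. One small correction to your closing remark: $\re T$ is \emph{always} self-adjoint for $T\in B(H)$, so the ``weakened hypothesis'' you describe is just the absence of any hypothesis; the paper's remark after Theorem \ref{Main THM} makes the same point with the $2\times 2$ Jordan block, whose real part has eigenvalues $\pm 1/2$.
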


In this paper, we carry on this investigation and deal with the
general case.

We recall a few well established facts. For example, if $T\in B(H)$
is normal, then
\[\|T^n\|=\|T\|^n,~\forall n\in\N.\]
It seems noteworthy to emphasize that thanks to the previous
equality, if $T$ is nilpotent then "$T=0\Leftrightarrow T\text{ is
normal}$". Therefore, when we further assume that $\re T\geq0$ and
prove Theorem \ref{Main THM} below, then this will become yet
another characterization to be added to the 89 conditions equivalent
to the normality of a matrix already obtained in
\cite{Elsner-Ikramov} and \cite{Grone et al-normal matrices}. A
somehow related paper is \cite{Fillmore et al quasinilpotent}.

The second main topic of the paper deals with (unbounded) closed
operators. So let's recall briefly some notions about non
necessarily bounded operators.

If $S$ and $T$ are two linear operators with domains $D(S)$ and
$D(T)$ respectively, then $T$ is said to be an extension of $S$,
written as $S\subset T$, if $D(S)\subset D(T)$ and $S$ and $T$
coincide on $D(S)$.

The product $ST$ and the sum $S+T$ of two operators $S$ and $T$ are
defined in the usual fashion on the natural domains:

\[D(ST)=\{x\in D(T):~Tx\in D(S)\}\]
and
\[D(S+T)=D(S)\cap D(T).\]

When $\overline{D(T)}=H$, we say that $T$ is densely defined. In
such case, the adjoint $T^*$ exists and is unique. If $S\subset T$
and $S$ is densely defined, then $T$ too is densely defined and
$T^*\subset S^*$.

An operator $T$ is called closed if its graph is closed in $H\oplus
H$. If $T$ is densely defined, we say that $T$ is self-adjoint when
$T=T^*$; symmetric if $T\subset T^*$; normal if $T$ is
\textit{closed} and $TT^*=T^*T$. A symmetric operator $T$ is called
positive if
\[\langle Tx,x\rangle\geq 0, \forall x\in D(T).\]
Notice that unlike positive operators in $B(H)$, an unbounded
positive operator need not be self-adjoint.

In the event of the density of all of $D(S)$, $D(T)$ and $D(ST)$,
then
\[T^*S^*\subset (ST)^*,\]
with equality occurring when $S\in B(H)$. Also, when $S$, $T$ and
$S+T$ are densely defined, then
\[S^*+T^*\subset (S+T)^*,\]
and the equality holds if $S\in B(H)$.

The real and imaginary parts of a densely defined operator $T$ are
defined respectively by
\[\re T=\frac{T+T^*}{2} \text{ and } \Ima T=\frac{T-T^*}{2i}.\]
Clearly, if $T$ is closed, then $\re T$ is symmetric but it is not
always self-adjoint (it may even fail to be closed).

\begin{defn}\label{cartesian decomp unbd DEFN}(\cite{Ota-normal extensions-BPAS}) Let $T$ be a densely
defined operator with domain $D(T)\subset H$. If there exist densely
defined symmetric operators $A$ and $B$ with domains $D(A)$ and
$D(B)$ respectively and such that
\[T=A+iB \text{ with } D(A)=D(B),\]
then $T$ is said to have a Cartesian decomposition.
\end{defn}

\begin{rema}A densely defined operator $T$ admits a Cartesian decomposition if and only if $D(T)\subset D(T^*)$. In this case, $T=A+iB$ where
\[A=\re T \text{ and } B=\Ima T.\]
\end{rema}

\section{The Bounded Case}

The first result tells us that a (non-zero) operator $T$ with a
positive (or negative) real or imaginary part is never nilpotent. It
may be known to some readers especially when $\dim H<\infty$. The
proof when $\dim H=\infty$ here relies on the finite dimensional
case. In the next section, we generalize this result to closed
operators.

\begin{thm}\label{Main THM}
Let $T=A+iB\in B(H)$ and let $n\geq2$. If $T^n=0$ and $A\geq0$ (or
$B\geq 0$), then $T=0$.
\end{thm}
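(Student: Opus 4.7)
My plan has three stages: reduce to the case $A \geq 0$, settle the finite-dimensional (matrix) version, and then reduce the general Hilbert space case to it. The case $B = \Ima T \geq 0$ reduces to $A = \re T \geq 0$ by replacing $T$ with $-iT$: nilpotence is preserved and $\re(-iT) = \Ima T \geq 0$. From now on assume $A \geq 0$.

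For the matrix case ($\dim H < \infty$), apply Schur's theorem to produce a unitary $U$ with $U^*TU$ upper triangular. Since $T$ is nilpotent its spectrum is $\{0\}$, so the diagonal of this triangular form vanishes; as unitary conjugation preserves positivity of the real part, I may assume outright that $T$ is upper triangular with zero diagonal and $A \geq 0$. Then $A$ is Hermitian positive semi-definite with zero diagonal, so $\tr A = 0$ equals the sum of its nonnegative eigenvalues, forcing $A = 0$ and hence $T^* = -T$. But $T$ upper triangular with zero diagonal makes $T^*$ lower triangular with zero diagonal, and the equality $T = -T^*$ between these forces $T = 0$.

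To pass to the general Hilbert space case, I would use the small $T$-invariant subspaces produced by nilpotence. For each $x \in H$, let $V_x$ be the linear span of $\{x, Tx, \dots, T^{n-1}x\}$; thanks to $T^n = 0$ this is $T$-invariant and finite-dimensional. The restriction $T_x := T|_{V_x}$ is nilpotent on $V_x$, and its real part computed inside $V_x$ equals the compression $P_{V_x}(\re T)|_{V_x}$, where $P_{V_x}$ is the orthogonal projection of $H$ onto $V_x$. Since compressions of positive operators are positive, $\re T_x \geq 0$; the matrix case then gives $T_x = 0$, hence $Tx = 0$, and since $x \in H$ was arbitrary, $T = 0$.

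I expect the main subtlety to lie in the infinite-dimensional step, specifically in correctly identifying $\re T_x$ (the real part taken intrinsically on the finite-dimensional Hilbert space $V_x$) with the compression $P_{V_x}(\re T)|_{V_x}$, so that positivity transfers to the restricted operator. The finite-dimensional core, though conceptually central, is a short exercise once Schur's theorem is invoked, and the initial reduction to $A \geq 0$ is essentially cosmetic.
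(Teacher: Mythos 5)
Your proposal is correct and follows essentially the same route as the paper: the infinite-dimensional case is reduced to the finite-dimensional one via the $T$-invariant orbit span of $x, Tx, \dots, T^{n-1}x$ together with the observation that positivity of the real part passes to compressions, and the finite-dimensional case rests on the trace argument (a positive semi-definite $A$ with $\tr A=0$ must vanish, after which the skew-adjoint nilpotent $T=iB$ is forced to be zero). Your use of Schur triangularization to see $\tr T=0$ and to finish off $T=-T^*$ is only a cosmetic variation of the paper's appeal to the vanishing trace of a nilpotent matrix and the normality of $iB$.
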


\begin{proof}The proof is carried out in two steps.
\begin{enumerate}
  \item Let $\dim H<\infty$. The proof uses a trace argument. First, assume that $A\geq0$.
Clearly, the nilpotence of  $T$ does yield $\tr T=0$. Hence
\[0=\tr(A+iB)=\tr A+i\tr B.\]

Since $A$ and $B$ are self-adjoint, we know that $\tr A,\tr B\in\R$.
By the above equation, this forces $\tr B=0$ and $\tr A=0$. The
positiveness of $A$ now intervenes to make $A=0$. Therefore, $T=iB$
and so $T$ is normal. Thus, and as alluded above,
\[0=\|T^n\|=\|T\|^n,\]
thereby, $T=0$.

In the event $B\geq0$, reason as above to obtain $T=A$ and so $T=0$,
as wished.
  \item Let $\dim H=\infty$. The condition $\re T\geq0$ is
  equivalent to $\re<Tx,x>\geq0$ for all $x\in H$. So if $E$ is a closed invariant
  subspace of $T$, then the previous condition also holds for
  $T|E:E\to E$.

  Now, we proceed to show that $T=0$, i.e. we must show that $Tx=0$ for all $x\in H$. So, let $x\in
  H$ and let $E$ be the span of $x, Tx,\cdots, T^{n-1}x$ (that is, the orbit of $x$ under the action of $T$). Hence $E$ is a finite dimensional subspace of $H$ (and so it is equally a Hilbert space).
  By the nilpotence assumption, we have
  \[T^nx=0,\]
  from which it follows that $E$ is invariant for $T$. So, by the first part
  of the proof (the finite dimensional case), we know that $T=0$ on
  $E$ whereby $Tx=0$. As this holds for any $x$, it follows that
  $T=0$ on $H$, as needed.
\end{enumerate}
\end{proof}

\begin{rema}For example, the condition $A\geq0$ may not just be
dropped. Indeed, if $T=\left(
                         \begin{array}{cc}
                           0 & 1 \\
                           0 & 0 \\
                         \end{array}
                       \right)$, then $T^2=0$ but $T\neq0$. Observe
                       finally
                       that
\[A=\re T=\frac{1}{2}\left(
                       \begin{array}{cc}
                         0 & 1 \\
                         1 & 0 \\
                       \end{array}
                     \right)
\]
is neither positive nor negative for $\sigma(A)=\{-1/2,1/2\}$.
\end{rema}

\begin{rema}
As mentioned above, the power of Theorem \ref{Main THM} lies in the
fact it easily allows us to test the non-nilpotence of a given
operator. For example, let $V$ be the Volterra's operator defined on
$L^2(0,1)$, that is,
\[Vf(x)=\int_0^xf(t)dt,~f\in L^2(0,1).\]
Then, it well known that $V$ is not nilpotent. Let's corroborate
this fact using Theorem \ref{Main THM}. Since $\re V\geq0$ (see e.g.
Exercise 9.3.21 in \cite{Mortad-Oper-TH-BOOK-WSPC}), assuming the
nilpotence of $V$ would make $V=0$, and this is impossible. Thus,
$V$ is not nilpotent.

The previous example also tells that the assumption may not be
weakened to quasinilpotence (recall that quasinilpotence means that
spectrum is reduced to the singleton $\{0\})$.
\end{rema}

Here is an alternative reformulation of Theorem \ref{Main THM} over
finite dimensional spaces.

\begin{cor}
Let $T\in M_n(\C)$ be nilpotent (with $T\neq0$). Then $(T+T^*)/2$
(or $(T-T^*)/{2i}$) has at least two eigenvalues of opposite signs.
\end{cor}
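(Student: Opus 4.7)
The plan is to derive the corollary as a direct contrapositive of Theorem \ref{Main THM}. The key observation is that $A=(T+T^*)/2$ is Hermitian, hence diagonalizable with all eigenvalues real, so its eigenvalues neatly split into positive, negative, and zero. The corollary then amounts to ruling out the cases ``all eigenvalues $\geq 0$'' and ``all eigenvalues $\leq 0$''.

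First I would argue that if all eigenvalues of $A$ were nonnegative, then $A\geq 0$ as a Hermitian matrix, and Theorem \ref{Main THM} would force $T=0$, contradicting our assumption. Second, I would handle the symmetric case by replacing $T$ with $-T$: since $(-T)^n=(-1)^nT^n=0$, the matrix $-T$ is also nilpotent, and $\re(-T)=-A$; so if all eigenvalues of $A$ were nonpositive, then $-A\geq 0$, and Theorem \ref{Main THM} applied to $-T$ would yield $-T=0$, again a contradiction. Together these force $A$ to possess at least one strictly positive and at least one strictly negative eigenvalue.

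For the imaginary part $B=(T-T^*)/(2i)$, the argument is identical, using the ``or $B\geq 0$'' clause of Theorem \ref{Main THM} for one direction and the nilpotent matrix $-T$ (with $\Ima(-T)=-B$) for the other.

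There is essentially no obstacle here; the only thing to be a little careful about is that the theorem as stated only gives $T=0$ when $A\geq 0$ (respectively $B\geq 0$), not when $A\leq 0$, so the negative-semidefinite case must be reduced to the positive-semidefinite one by the sign-flip trick above. The proof will be a couple of short lines.
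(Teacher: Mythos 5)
Your proof is correct and is exactly the argument the paper intends: the corollary is presented as a direct reformulation (contrapositive) of Theorem \ref{Main THM}, and your sign-flip reduction via the nilpotent matrix $-T$ correctly supplies the negative-semidefinite case that the theorem's statement does not cover verbatim.
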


\section{The Unbounded Case}

We confine our attention now to the case of unbounded nilpotent
operators. We choose to use \^{O}ta's definition in
\cite{Ota-nilpotent-idempotent} of nilpotence (notice that S.
\^{O}ta gave the definition in the case $n=2$).

\begin{defn}
Let $T$ be a non necessarily bounded operator with a dense domain
$D(T)$. We say that $T$ is nilpotent if $T^n$ is well defined and
\[T^n=0 \text{ on }D(T)\]
for some $n\in\N$ (hence $D(T^n)=D(T^{n-1})=\cdots D(T)$).
\end{defn}

Thanks to the following lemma, there are not any unbounded
self-adjoint \textit{nilpotent} operators!

\begin{lem}\label{ghghhgghghghghghghghghghghghghghghgh}(\cite{Sebestyen-Stochel-JMAA}) If $H$ and $K$ are two
Hilbert spaces and if $T:D(T)\subset H\to K$ is a densely defined
closed operator, then
\[D(T)=D(T^*T)\Longleftrightarrow T\in B(H,K).\]
\end{lem}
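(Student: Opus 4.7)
The plan is to prove the easy direction by inspection and to reduce the substantive direction to a pair of norm estimates combined with the closed graph theorem. For the implication $T\in B(H,K)\Rightarrow D(T)=D(T^*T)$, boundedness of $T$ forces boundedness of $T^*$, so $T^*T$ is everywhere defined and $D(T^*T)=H=D(T)$; nothing more is needed.

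For the converse, write $D=D(T)=D(T^*T)$. My strategy is to extract a bound of the form $\|Tx\|\le C'\|x\|$ on $D$, and then use closedness of $T$ together with density of $D$ to force $D=H$. The first ingredient is that, since $T$ is densely defined and closed, von Neumann's theorem makes $T^*T$ self-adjoint, hence in particular closed. From the identity $\|Tx\|^2=\langle T^*Tx,x\rangle$, valid on $D$, Cauchy--Schwarz yields the first estimate
\[\|Tx\|^2\le \|T^*Tx\|\,\|x\|,\qquad x\in D.\]

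Next I would equip $D$ with the graph norm $\|x\|_T^2=\|x\|^2+\|Tx\|^2$; since $T$ is closed, $(D,\|\cdot\|_T)$ is a Hilbert space. Viewing $T^*T$ as a linear map from $(D,\|\cdot\|_T)$ into $H$, its graph is closed, because if $x_n\to x$ in the $T$-graph norm and $T^*Tx_n\to y$ in $H$, the closedness of $T^*T$ forces $x\in D$ and $T^*Tx=y$. The closed graph theorem then supplies a constant $C>0$ with
\[\|T^*Tx\|\le C\bigl(\|x\|+\|Tx\|\bigr),\qquad x\in D.\]
Substituting this into the first estimate produces a quadratic inequality in $\|Tx\|$ which routinely solves to $\|Tx\|\le C'\|x\|$ for a suitable $C'>0$. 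Thus $T$ is bounded on its domain. A standard Cauchy-sequence argument, using once more that $T$ is closed, then shows $D$ is closed in $H$; being also dense, $D$ must equal $H$, and therefore $T\in B(H,K)$.

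The principal obstacle, as I see it, is that the hypothesis $D(T)=D(T^*T)$ is purely qualitative and encodes no visible quantitative information. The whole point of the proof is to manufacture a quantitative bound out of this set-theoretic equality, and it is precisely the self-adjointness (hence closedness) of $T^*T$ that makes both the Cauchy--Schwarz step and the closed graph step go through.
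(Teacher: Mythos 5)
Your proof is correct, and there is nothing in the paper to measure it against: the lemma is quoted with a citation to Sebesty\'en and Stochel and no proof is given in the text. On its own terms your argument is complete. The easy direction is as you say. For the converse, the identity $\|Tx\|^2=\langle T^*Tx,x\rangle$ on $D=D(T)=D(T^*T)$, the self-adjointness (hence closedness) of $T^*T$ from von Neumann's theorem, and the closed graph theorem applied to $T^*T$ viewed as an everywhere-defined map from the graph-norm Hilbert space $(D,\|\cdot\|_T)$ into $H$ do combine to give $\|T^*Tx\|\le C(\|x\|+\|Tx\|)$, and the resulting inequality $\|Tx\|^2\le C\|x\|^2+C\|x\|\,\|Tx\|$ solves to $\|Tx\|\le C'\|x\|$ with $C'=\bigl(C+\sqrt{C^2+4C}\bigr)/2$; closedness plus density then forces $D=H$. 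For comparison, a slightly shorter route that is perhaps the more standard one: von Neumann's theorem also gives $D(T)=D(|T|)$ with $|T|=(T^*T)^{1/2}$, so the hypothesis becomes $D(S)=D(S^{1/2})$ for the positive self-adjoint operator $S=T^*T$, and the spectral theorem shows at once that such an $S$ must be bounded, whence $|T|$, and therefore $T$, is bounded and everywhere defined. Your closed-graph argument buys independence from the spectral theorem at the cost of the quadratic-inequality bookkeeping; both are valid.
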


Since for a normal $T$,  $D(T^*T)=D(T^2)$ we see that there are not
any unbounded normal \textit{nilpotent} operators either. So, it is
natural to ask is whether there are unbounded closed symmetric
unbounded operators? The answer is still negative! In fact, any
densely defined closed nilpotent operator $T$ with $D(T)\subset
D(T^*)$ is everywhere bounded.

\begin{pro}\label{wqssdfhgkytmpeutaa}
Let $T$ be a densely defined closed nilpotent operator with domain
$D(T)$ such that $D(T)\subset D(T^*)\subset H$. Then $T\in B(H)$.

In particular, if $T$ is a closed densely defined nilpotent
symmetric or hyponormal operator, then $T=0$ everywhere on $H$.
\end{pro}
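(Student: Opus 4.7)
The plan is to reduce everything to the Sebesty\'en--Stochel criterion (Lemma~\ref{ghghhgghghghghghghghghghghghghghghgh}) by verifying the identity $D(T^*T)=D(T)$.

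First, I would unpack the nilpotence hypothesis. By the definition given just above, $D(T^n)=D(T^{n-1})=\cdots=D(T)$. In particular, the equality $D(T^2)=D(T)$ says precisely that $x\in D(T)$ forces $Tx\in D(T)$; that is, $T$ leaves its own domain invariant. Combining this with the assumed inclusion $D(T)\subset D(T^*)$, for every $x\in D(T)$ one has $Tx\in D(T)\subset D(T^*)$, so $x\in D(T^*T)$. Thus $D(T)\subset D(T^*T)$, and the reverse inclusion is built into the very definition of the product domain. Consequently $D(T)=D(T^*T)$, and since $T$ is densely defined and closed, Lemma~\ref{ghghhgghghghghghghghghghghghghghghgh} yields $T\in B(H)$.

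For the ``in particular'' claim, I note that any densely defined symmetric operator satisfies $T\subset T^*$, and that a densely defined hyponormal operator satisfies $D(T)\subset D(T^*)$ (together with $\|T^*x\|\le\|Tx\|$ on $D(T)$) by definition; either way the hypothesis $D(T)\subset D(T^*)$ is in force, so the first part of the proposition already forces $T\in B(H)$. In the symmetric case, a bounded symmetric operator defined on all of $H$ is self-adjoint, hence normal, and a normal nilpotent bounded operator must be zero via $\|T\|^n=\|T^n\|=0$. In the hyponormal case, I would invoke the standard fact that a bounded hyponormal operator satisfies $r(T)=\|T\|$; nilpotence forces $\sigma(T)=\{0\}$, so $\|T\|=r(T)=0$, and again $T=0$.

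The only conceptual point worth emphasizing is the first one: recognizing that \^{O}ta's definition of nilpotence bakes in the invariance $T(D(T))\subset D(T)$, which is exactly what lets one form $T^*T$ on all of $D(T)$ without any domain trouble. Once this is seen, the argument is a direct application of the Sebesty\'en--Stochel lemma together with classical facts about bounded normal and hyponormal operators, so no serious obstacle is anticipated.
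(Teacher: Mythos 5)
Your argument is correct and follows essentially the same route as the paper: both proofs establish $D(T^*T)=D(T)$ from $D(T^2)=D(T)$ together with $D(T)\subset D(T^*)$ and then invoke the Sebesty\'en--Stochel lemma. You merely make explicit some details the paper leaves to the reader, such as the spectral-radius argument in the hyponormal case.
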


\begin{proof}Let $T$ be a densely defined closed  operator
with domain $D(T)\subset H$ such that $T^n=0$ on $D(T)$ for some $n$
and $D(T)\subset D(T^*)$. It is seen that
\[D(T)=D(T^2)\subset D(T^*T)\subset D(T)\]
whereby $D(T^*T)=D(T)$. Since $T$ is closed, Lemma
\ref{ghghhgghghghghghghghghghghghghghghgh} yields $T\in B(H)$.

The last statement of the proposition follows from the general
theory. Indeed, when $T\in B(H)$, then $T$ is self-adjoint iff it is
symmetric. Accordingly, $T=0$ since $T^n=0$ everywhere on $H$. The
case of hyponormality is also known to readers.
\end{proof}

\begin{rema}The previous result may be reformulated as: Any closed densely defined nilpotent
operator having a Cartesian decomposition is necessarily everywhere
bounded.
\end{rema}

As alluded above, Theorem \ref{Main THM} remains valid in the
context of closed operators.

\begin{thm}Let $T=A+iB$ where either $A$ or $B$ is positive with $D(T)\subset D(T^*)$. If $T$ is nilpotent, then
$T\in B(H)$ is normal thereby $T=0$ everywhere on $H$.
\end{thm}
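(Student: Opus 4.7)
The plan is to chain together Proposition \ref{wqssdfhgkytmpeutaa} with Theorem \ref{Main THM}; the assertion is essentially the conjunction of those two results. Observe first that the hypothesis $D(T)\subset D(T^*)$ is precisely the condition ensuring that $T$ admits a Cartesian decomposition, per the remark following Definition \ref{cartesian decomp unbd DEFN}. Hence $A=\re T$ and $B=\Ima T$ are the densely defined symmetric operators from that decomposition, sharing the common domain $D(T)$, and one of them is positive by assumption.

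Next I would invoke Proposition \ref{wqssdfhgkytmpeutaa}: the operator $T$ is (implicitly) closed, densely defined, nilpotent, and satisfies $D(T)\subset D(T^*)$, so the proposition delivers $T\in B(H)$ immediately. This is the only genuinely unbounded step in the argument, and the hard work was already carried out via Lemma \ref{ghghhgghghghghghghghghghghghghghghgh} and the inclusion chain $D(T)=D(T^2)\subset D(T^*T)\subset D(T)$.

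Once $T\in B(H)$, its domain equals $H$, so $\re T = (T+T^*)/2$ and $\Ima T=(T-T^*)/(2i)$ are bounded self-adjoint operators on all of $H$; moreover they coincide with $A$ and $B$. The positivity hypothesis on $A$ (respectively $B$), originally formulated on $D(T)$, becomes positivity in the standard $B(H)$ sense because $D(T)=H$. Applying Theorem \ref{Main THM} to the bounded nilpotent operator $T$ with $\re T\geq 0$ (or $\Ima T\geq 0$) yields $T=0$, which is trivially normal.

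I do not anticipate any real obstacle: the statement is an almost formal consequence of two results already established in the paper, the only mildly delicate point being the remark that the unbounded-style positivity of $A$ or $B$ transfers unambiguously to positivity in $B(H)$ once $D(T)=H$ has been secured via Proposition \ref{wqssdfhgkytmpeutaa}.
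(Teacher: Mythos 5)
Your proposal is correct and follows essentially the same route as the paper: the authors likewise deduce $D(T^2)=D(T)$ from nilpotence, invoke the argument of Proposition \ref{wqssdfhgkytmpeutaa} to conclude $T\in B(H)$, and then finish by applying Theorem \ref{Main THM}. Your additional remarks (that $D(T)\subset D(T^*)$ is the Cartesian-decomposition condition, that closedness of $T$ is implicitly assumed, and that positivity on $D(T)$ becomes positivity in the $B(H)$ sense once $D(T)=H$) are accurate and merely make explicit what the paper leaves tacit.
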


\begin{proof}
Since $T^n=0$ on $D(T)$ for some natural integer $n$, we have
$D(T^2)=D(T)$. Reason as in Proposition \ref{wqssdfhgkytmpeutaa} to
obtain $T\in B(H)$. We have thus gone back to the setting of Theorem
\ref{Main THM}, i.e. we obtain $T=0$, as wished.
\end{proof}

Before stating and proving the last result in this paper, we give
some auxiliary results which are also interesting in their own.
Notice that they might well be known to specialists, however, they
are not documented (to the best of our knowledge).

It is worth noticing in passing that there are unbounded
self-adjoint operators $A$ and $B$ such that $A+iB\subset 0$ (where
0 designates the zero operator on all of $H$), yet $A\not\subset 0$
and $B\not\subset 0$. For example, let $A$ and $B$ be unbounded
self-adjoint operators such that $D(A)\cap D(B)=\{0_H\}$ (see e.g.
\cite{KOS}). Assuming $D(A)=D(B)$ makes the whole difference.
Indeed:

\begin{pro}\label{kkkkkkkkkkkkkkkkkkkkkkkkkkk}
Let $A$ and $B$ be two densely defined symmetric operators with
domains $D(A),D(B)\subset H$ respectively. Assume that $D(A)=D(B)$.
If $A+iB\subset 0$, then $A\subset 0$ and $B\subset 0$. If $A$ (or
$B$) is further taken to be closed, then $A=B=0$ everywhere on $H$.
\end{pro}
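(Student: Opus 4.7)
The plan is to exploit the rigidity provided by the common domain $D(A)=D(B)$, reducing the statement to a polarization argument. For any $x\in D(A)=D(B)$, the hypothesis $A+iB\subset 0$ gives $Ax=-iBx$. Taking the inner product against $x$ yields
\[\langle Ax,x\rangle = -i\langle Bx,x\rangle.\]
Since $A$ and $B$ are symmetric, both $\langle Ax,x\rangle$ and $\langle Bx,x\rangle$ are real. An equation of the form $\alpha=-i\beta$ with $\alpha,\beta\in\R$ forces $\alpha=\beta=0$. Hence $\langle Ax,x\rangle=0$ and $\langle Bx,x\rangle=0$ for every $x$ in the common domain.

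Next I would upgrade these numerical identities to operator identities via the complex polarization formula
\[4\langle Ax,y\rangle = \sum_{k=0}^{3} i^{k}\langle A(x+i^{k}y),x+i^{k}y\rangle,\]
valid for any linear $A$ on a complex inner product space and any $x,y\in D(A)$ (note $x\pm y, x\pm iy\in D(A)$ because $D(A)$ is a linear subspace). Since every diagonal term on the right vanishes, we get $\langle Ax,y\rangle=0$ for all $x,y\in D(A)$. Because $D(A)$ is dense in $H$, this forces $Ax=0$ for every $x\in D(A)$, i.e. $A\subset 0$. The same argument applied to $B$ gives $B\subset 0$.

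Finally, suppose $A$ is closed. For any $x\in H$ choose $x_n\in D(A)$ with $x_n\to x$ (using density). Then $Ax_n=0\to 0$, so the closedness of $A$ places $x\in D(A)$ with $Ax=0$. Thus $D(A)=H$ and $A=0$ everywhere. Since by assumption $D(B)=D(A)=H$ and we already have $B\subset 0$, this immediately gives $B=0$ everywhere as well. The argument is entirely symmetric if instead $B$ is assumed closed.

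I do not anticipate a serious obstacle: the only subtlety is remembering that the polarization identity in its four-term form requires a complex scalar field (true here) and that $D(A)$ must be a linear subspace so that $x\pm y$ and $x\pm iy$ lie in it (which holds by definition of a linear operator). The passage from $A\subset 0$ to $A=0$ everywhere is essentially the observation that the graph of $A$ is contained in, and by closedness equals, $H\times\{0\}$.
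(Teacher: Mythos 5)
Your proof is correct and follows essentially the same route as the paper: the paper's terse assertion that ``$A=-iB$ with $A,B$ symmetric forces $A\subset 0$ and $B\subset 0$'' is exactly the real-quadratic-form plus polarization argument you spell out. The only (immaterial) difference is in the final upgrade: the paper passes through adjoints ($A\subset 0$ gives $A^*=0$ on all of $H$, then closedness gives $A=A^{**}=0$), whereas you argue directly that the closed graph of $A$ must contain, hence equal, $H\times\{0\}$ --- both are fine.
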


\begin{proof}
By assumption $A+iB\subset 0$. Since $D(A)=D(B)$, it ensues that
$A=-iB$. But $A$ and $B$ are both symmetric, and so the only
possible outcome is $A\subset 0$ and $B\subset 0$.

Since $A\subset 0$, it follows that $A^*=0$ everywhere on $H$. By
the closedness of $A$, we obtain $A=0$. A similar reasoning applies
to $B$ because $A=-iB$ makes $B$ closed and so $B=0$ as well.
\end{proof}

It is known that pointwise commutativity of unbounded (self-adjoint
or normal) operators does not always mean their strong
commutativity, i.e. the commutativity of their spectral measures
(witness Nelson's counterexample). So, the next result on (strong)
commutativity might be unknown to some readers.

\begin{pro}\label{cvbncvbncvbncvbncvbncvbn}(Cf. \cite{MHM1})
Let $A$ and $B$ be two unbounded self-adjoint operators with domains
$D(A)$ and $D(B)$ respectively. Assume that $A$ is also positive and
that $D(A)=D(B)$. If $BA\subset AB$, then $A$ commutes strongly with
$B$.
\end{pro}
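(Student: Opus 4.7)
The plan is to reduce the unbounded commutation to a bounded one and then invoke the standard spectral-calculus criterion for strong commutativity. The workhorse will be $R:=(A+I)^{-1}$, which, thanks to the self-adjointness and positivity of $A$, belongs to $B(H)$, is self-adjoint with spectrum in $(0,1]$, and satisfies $\operatorname{ran}R=D(A)$.

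The crucial observation, and what I expect to be the delicate step, is a bootstrap that exploits the hypothesis $D(A)=D(B)$: for every $y\in D(A)=D(B)$ the element $u:=Ry$ actually lies in $D(A^2)$. Indeed, $u\in D(A)$ and $Au=(A+I)u-u=y-u$, and since both $y$ and $u$ belong to $D(A)$, their difference $Au$ does too. This upgrade from $D(A)$ to $D(A^2)$ is exactly what is needed, because the a priori weak hypothesis $BA\subset AB$ only encodes information on $D(BA)=D(A^2)$; without the equality of domains this step collapses.

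Next I would prove $RB\subset BR$. Take $y\in D(B)$ and $u=Ry\in D(A^2)$. Using $D(A)=D(B)$ and $BA\subset AB$, we obtain $Bu\in D(A)$ and $ABu=BAu=B(y-u)=By-Bu$. Adding $Bu$ to both sides yields $(A+I)Bu=By$, whence $Bu=R(By)$, i.e. $BRy=RBy$, as desired.

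Finally, the standard criterion says that a bounded self-adjoint operator $R$ satisfying $RB\subset BR$ commutes with every spectral projection $E_B(\cdot)$ of $B$ (equivalently with $(B\pm i)^{-1}$, which one checks in one line from $RB\subset BR$). Since $A=R^{-1}-I$ is a Borel function of $R$, the spectral projections $E_A(\cdot)$ are Borel functions of $R$, and therefore also commute with the spectral projections of $B$. By definition, this is the strong commutativity of $A$ with $B$.
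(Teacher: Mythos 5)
Your proof is correct and follows essentially the same route as the paper: both reduce the problem to the bounded resolvent $R=(A+I)^{-1}$ and establish $RB\subset BR$ (you element-wise via the observation that $Ry\in D(A^2)=D(BA)$ for $y\in D(B)$, the paper by first noting $B(A+I)\subset (A+I)B$ and then multiplying through by $(A+I)^{-1}$). Your closing paragraph, explaining why $RB\subset BR$ forces the spectral measures of $A$ and $B$ to commute, merely makes explicit a step the paper asserts without detail.
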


\begin{proof}By hypothesis, $BA\subset AB$. Hence $B(A+I)\subset
(A+I)B$ because
\[D[B(A+I)]\subset D[(A+I)B].\]

Since $A$ is self-adjoint and positive, it results that $A+I$ is
boundedly invertible. Left and right multiplying by $(A+I)^{-1}$
yield
\[(A+I)^{-1}B\subset B(A+I)^{-1}.\]
This means that $A$ commutes strongly with $B$, therefore completing
the proof.
\end{proof}

As readers are aware, the condition $D(T^2)=D(T)$ is strong. Why not
call a densely defined operator $T$ nilpotent when $T^n\subset 0$
for a certain $n$? The main issue would be that it is quite
conceivable to have $T^n$ defined only at 0. See e.g. \cite{CH},
\cite{Dehimi-Mortad-CHERNOFF}, \cite{Mortad-TRIVIALITY POWERS
DOMAINS}, \cite{Mortad-cex-BOOK}, and
\cite{SCHMUDG-1983-An-trivial-domain} (cf. \cite{Arlinski-Zagrebnov}
and \cite{Brasche-Neidhardt}). A recent somewhat related paper
\cite{Dehimi-Mortad-squares-polynomials} might be of some interest
to readers.

Let us treat this case anyway.

\begin{thm}\label{T2 subset 0 THM}
Let $T=A+iB$ where $A$ and $B$ are self-adjoint (one of them is also
positive), $D(A)=D(B)$ and $D(BA)\subset D(AB)$. If $T^2\subset 0$,
then $T\in B(H)$ is normal, and so $T=0$ everywhere on $H$.
\end{thm}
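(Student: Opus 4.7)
The strategy is to extract from $T^2 \subset 0$ the two identities $A^2 = B^2$ and $AB + BA = 0$ on a dense subdomain; positivity of (WLOG) $A$ then forces $A = |B|$, so $A$ and $B$ commute strongly, and combined with $AB + BA = 0$ this collapses everything to zero via functional calculus.

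First I would sort out the domains. Because $D(A) = D(B)$, a direct check gives $D(BA) = D(A^2)$ and $D(AB) = D(B^2)$, so the hypothesis $D(BA) \subset D(AB)$ reads $D(A^2) \subset D(B^2)$. For $x \in D(BA)$, both $Ax \in D(B) = D(A)$ and $Bx \in D(A)$, so $Tx = Ax + iBx \in D(A) = D(T)$; hence $D(BA) \subset D(T^2)$, and on this subdomain one expands legitimately
\[
T^2 x = (A^2 - B^2)x + i(AB + BA)x.
\]
By hypothesis this vanishes for every $x \in D(BA)$.

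Next I would exploit the Hermitian structure. Both $A^2 - B^2$ and $AB + BA$ are symmetric on $D(BA)$, the latter via the standard inclusion $BA \subset (AB)^*$ recalled in the introduction. Pairing the vanishing vector identity with $x$ gives
\[
0 = \langle (A^2 - B^2)x, x\rangle + i\langle (AB + BA)x, x\rangle,
\]
whose summands are real and purely imaginary respectively; hence each vanishes on the dense set $D(BA)$. Polarization over $\C$ then yields $A^2 x = B^2 x$ and $(AB + BA)x = 0$ on $D(BA)$. Combining the first identity with $A^2 \subset B^2$ and the self-adjointness of both sides, we get $A^2 = B^2$ as self-adjoint operators.

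Finally, $A \geq 0$ together with $A^2 = B^2$ yields $A = |B|$ by uniqueness of the positive square root. Consequently $A = |B|$ commutes strongly with $B$ via the functional calculus, so $AB = BA$ on $D(B^2)$; combined with $AB + BA = 0$ this gives $ABx = 0$ for every $x \in D(B^2)$, i.e.\ $|B|B = 0$. The spectral theorem then forces $\sigma(B) \subset \{0\}$, whence $B = 0$ on $D(B)$ and $A = |B| = 0$ on $D(A)$. Self-adjointness extends these identities to all of $H$, so $T = 0 \in B(H)$, which is trivially normal; if $B$ is positive instead of $A$, the roles swap. I expect the main obstacle to be the careful domain bookkeeping at the start—verifying $D(BA) \subset D(T^2)$ and expanding $T^2$ legitimately—together with extracting two independent self-adjoint identities from a single vector equation via polarization; once $A = |B|$ is in hand the conclusion follows mechanically.
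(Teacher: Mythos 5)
Your proof is correct, and for the decisive second half it takes a genuinely different route from the paper's main argument. The opening is the same in substance: you expand $T^2$ on $D(BA)=D(A^2)$, note that $A^2-B^2$ and $AB+BA$ are symmetric on that (dense) domain, and split the vanishing sum into its real and imaginary parts --- the paper packages this splitting as a separate proposition (that $A+iB\subset 0$ with $A,B$ symmetric on a common domain forces $A\subset 0$ and $B\subset 0$), whereas you reprove it on the spot via quadratic forms and polarization. The divergence comes next. The paper keeps only $AB+BA\subset 0$, deduces $BA^2\subset -ABA\subset A^2B$, invokes Bernau's theorem on commutativity with positive square roots to get $BA\subset AB$, then a separate strong-commutativity proposition to conclude that $T$ is normal, and finally uses maximality of normal operators plus the closed graph theorem to land on $T=0$. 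You instead exploit the other half of the information, $A^2\subset B^2$, upgrade it to $A^2=B^2$ by self-adjoint maximality, identify $A=|B|$ by uniqueness of the positive square root, and then kill $B$ directly from $|B|B=0$ via the functional calculus. This is essentially the alternative argument the authors themselves sketch in the remark following the theorem (they use the polar decomposition $B=U|B|$ where you use the spectral theorem directly). What each approach buys: the paper's main route produces the normality of $T$ as a genuine intermediate statement (which is how the theorem is phrased) and showcases the auxiliary commutativity results; yours is shorter and more self-contained, bypassing Bernau's theorem and the strong-commutativity proposition entirely, at the cost of obtaining normality only trivially after the fact from $T=0$.
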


\begin{proof}
Assume that $A$ is positive (the proof in the case of the
positiveness of $B$ is similar). Let $T=A+iB$. Clearly,
\[A^2-B^2+i(AB+BA)\subset (A+iB)A+i(A+iB)B=T^2\subset 0.\]
Since $D(A)=D(B)$, it follows that
\[D(A^2)=\{x\in D(A):Ax\in D(A)\}=\{x\in D(A):Ax\in D(B)\}=D(BA).\]
In a similar manner, it is seen that $D(B^2)=D(AB)$. Thus,
\[D(A^2-B^2)=D(AB+BA).\]
We also have $D(BA)\subset D(AB)$. Accordingly,
\[D(A^2-B^2)=D(AB+BA)=D(BA)=D(A^2).\]

Since $A$ is self-adjoint, so is $A^2$ and in particular $A^2$ is
necessarily densely defined. Thus, $A^2-B^2$ and $AB+BA$ are both
densely defined. Now, by the symmetricity (only) of both $A$ and $B$
we have that both $AB+BA$ and $A^2-B^2$ are symmetric. By
Proposition \ref{kkkkkkkkkkkkkkkkkkkkkkkkkkk}, we get $AB+BA\subset
0$. Hence $BA\subset -AB$ (for $D(BA)\subset D(AB)$) and so
\[BA^2\subset -ABA\subset A^2B.\]

As $A$ is positive, we obtain $BA\subset AB$ by say \cite{Bernau
JAusMS-1968-square root}. By Proposition
\ref{cvbncvbncvbncvbncvbncvbn}, we have that $A$ commutes strongly
with $B$. Whence $T$ is normal. Hence $T^2$ too is normal and so by
maximality $T^2\subset 0$ becomes $T^2=0$ everywhere on $H$.
Consequently, $D(T^2)=H$ and hence $D(T)=H$. Since $T$ is closed, it
follows by the closed graph theorem that $T\in B(H)$. Finally, $T=0$
follows by the normality of $T$, as needed.
\end{proof}

\begin{rema}
Another way of obtaining $A=B=0$ in the result above (and without
using Proposition \ref{cvbncvbncvbncvbncvbncvbn}) reads: Since $B$
is self-adjoint, $B=U|B|=|B|U$ where $U$ is unitary and
self-adjoint, i.e. $U^2=I$ and $U^*=U$ (see e.g.
\cite{Gustafson-Mortad-II} or \cite{Jung-Mortad-Stochel}).

As above, we may obtain $BA\subset -AB$ and $A^2-B^2\subset 0$.
Since $D(BA)=D(A^2)$ and $D(AB)=D(B^2)$, we get $A^2\subset B^2$.
Since $A^2$ and $B^2$ are both self-adjoint, a maximality argument
yields $A^2=B^2$ which, upon passing to the unique positive square
root, implies that $A=|B|$ as $A$ is positive. Hence $B=UA=AU$.
Therefore
\[UA^2=U|B|A=BA\subset -AB=-UA^2.\]
Hence
\[U^2A^2=A^2\subset -U^2A^2=-A^2.\]
Thus, $A^2=-A^2$ and so $A=0$, thereby $B=0$ as well.
\end{rema}

A variant of Theorem \ref{T2 subset 0 THM} is:

\begin{thm}
Let $A$ and $B$ be two self-adjoint operators (one of them is also
positive) such that $D(A^2)=D(B^2)$. If $T=A+iB$ and $T^2\subset 0$,
then $T=0$ everywhere on $H$.
\end{thm}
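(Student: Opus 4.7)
The plan is to reduce this statement to Theorem \ref{T2 subset 0 THM} by deriving both of its domain hypotheses, $D(A)=D(B)$ and $D(BA)\subset D(AB)$, from the single hypothesis $D(A^2)=D(B^2)$. Assume throughout that $A\geq 0$; the other case is symmetric.

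The main step is to produce $D(A)=D(B)$. Both $A^2$ and $B^2$ are positive self-adjoint operators sharing the same domain, and a standard consequence of Heinz's inequality (operator monotonicity of $\lambda\mapsto\lambda^{1/2}$ on $[0,\infty)$) is that $D(P^{1/2})=D(Q^{1/2})$ whenever $P,Q$ are positive self-adjoint with $D(P)=D(Q)$. Applying this with $P=A^2$ and $Q=B^2$ yields $D((A^2)^{1/2})=D((B^2)^{1/2})$; since $A\geq 0$ we have $(A^2)^{1/2}=A$, and $(B^2)^{1/2}=|B|$ with $D(|B|)=D(B)$ by the functional calculus for self-adjoint $B$. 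Hence $D(A)=D(B)$.

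Once $D(A)=D(B)$ is in hand, the same computation used in the proof of Theorem \ref{T2 subset 0 THM} gives
\[D(BA)=\{x\in D(A):Ax\in D(B)\}=\{x\in D(A):Ax\in D(A)\}=D(A^2),\]
and symmetrically $D(AB)=D(B^2)$. Combined with $D(A^2)=D(B^2)$, this forces $D(BA)=D(AB)$, so in particular $D(BA)\subset D(AB)$. All of the hypotheses of Theorem \ref{T2 subset 0 THM} are therefore in force, and that result immediately delivers $T\in B(H)$, $T$ normal, and $T=0$ everywhere on $H$.

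The only substantive ingredient is the invocation of Heinz's inequality. Should one wish to avoid it, the proof can instead be carried out in parallel with that of Theorem \ref{T2 subset 0 THM}: start from $A^2-B^2+i(AB+BA)\subset T^2\subset 0$ on the common domain $D(A^2)\cap D(AB)\cap D(BA)$, use Proposition \ref{kkkkkkkkkkkkkkkkkkkkkkkkkkk} to separate real and imaginary parts, extract $A^2=B^2$ and hence $A=|B|$ via the positivity of $A$, and conclude with the polar decomposition $B=UA=AU$ exactly as in the remark following Theorem \ref{T2 subset 0 THM}.
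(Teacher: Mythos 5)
Your proposal is correct and follows essentially the same route as the paper: the paper likewise derives $D(A)=D(B)$ from $D(A^2)=D(B^2)$ by passing to square roots (citing Theorem 9.4 in Weidmann, which plays the role of your Heinz-inequality step, together with $\sqrt{A^2}=A$ and $D(|B|)=D(B)$), then observes $D(BA)=D(A^2)=D(B^2)=D(AB)$ and concludes by invoking Theorem \ref{T2 subset 0 THM}. Your closing alternative mirrors the remark already following that theorem, so nothing essentially new is needed there either.
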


\begin{proof}We need only go back to the assumptions of Theorem
\ref{T2 subset 0 THM}. WLOG, assume that $A$ is positive. Since $B$
is self-adjoint, $B^2$ is self-adjoint and positive. Then, by
Theorem 9.4 in \cite{Weidmann}, we obtain
\[D(A)=D(\sqrt{A^2})=D(\sqrt{B^2})=D(|B|)=D(B)\]
by invoking the closedness of $B$ and the positiveness of $A$. Hence
\[D(BA)=D(A^2)=D(B^2)=D(AB).\]
Therefore, we have recovered all of the assumptions of Theorem
\ref{T2 subset 0 THM}. The remaining parts  of the proof stay
unchanged.
\end{proof}

We finish with an example showing the importance of the
self-adjointness of $A$ and $B$.

\begin{exa}There is a densely defined closed symmetric positive operator $A$ such that
$T:=A+iA$ obeys $T^2\subset 0$ yet $T\not\subset 0$.

To obtain such example, recall that P. R. Chernoff \cite{CH}
obtained a densely defined closed, symmetric and positive operator
$A$ such that $D(A^2)=\{0\}$. Now, let $B=A$ and set
$T=A+iA=(1+i)A$. Then
\[D(T^2)=D(A^2)=\{0\}\]
and so $T^2\subset 0$ trivially. Observe in the end that
$T\not\subset 0$, i.e. $T$ does not vanish on $D(T)$. Observe in the
end that neither $A$ nor $B$ were self-adjoint.
\end{exa}

\section*{Acknowledgements}

The corresponding author wishes to express his gratitude to
Professor Alexander M. Davie (the University of Edinburgh) for his
help in the infinite dimensional case of the proof of Theorem
\ref{Main THM}.

Notice that the idea of using the polar decomposition in the remark
below the proof of Theorem \ref{T2 subset 0 THM}, comes from one of
the readers of the paper. This was suggested when the assumption
$D(BA)=D(AB)$ was made instead of $D(BA)\subset D(AB)$ in an earlier
version of that theorem. Then, we saw the improvement using the
assumption $D(BA)\subset D(AB)$.

\end{document}